\newcolumntype{L}[1]{>{\raggedleft\arraybackslash   }b{#1}}
\theoremstyle{plain}
\newtheorem{theorem}{Theorem}[section]
\newtheorem{prop}[theorem]{Proposition}
\newtheorem{prof}[theorem]{Proof}
\newtheorem{remark}[theorem]{Remark}
\theoremstyle{definition}
\newtheorem{definition}[theorem]{Definition}
\begin{document}


%

\begin{center} {\Large\bf Solving Nonsmooth Bi-Objective Environmental and
Economic Dispatch Problem using Smoothing
Techniques}
\end{center}
\begin{center}
{\footnotesize Mohamed Tifroute$^1$,  Anouar Lahmdani$^2$ and Hassane Bouzahir$^3$ \\
E-mails: mohamed.tifroute@gmail.com, anouar.lahmdani@gmail.com
}\end{center}

\vspace{30pt}

\begin{center} {\footnote{Higher School of Technology - Guelmim , Ibn Zohr University, Morocco}, \footnote{
FSA - Ait Melloul, Ibn Zohr University, Morocco} and \footnote{ENSA, Ibn Zohr University, Morocco } }
\end{center}

\clearpage

\begin{abstract}
The Environmental and Economic Dispatch problem (EEDP)\cite{A42, A99} is a nonlinear Multi-objective Optimization Problem (MOP) which simultaneously satisfies multiple contradictory criteria, and it's a nonsmooth problem when valvepoint effects, multi-fuel effects and prohibited operating zones have been considered. It
is an important optimization task in fossil fuel fired power plant operation for
allocating generation among the committed units such that fuel cost and pollution (emission level) are optimized simultaneously while satisfying all operational
constraints. In this paper, we use smoothing functions with the gradient
consistency property to approximate the nonsmooth   multi-objective
Optimization problem. Our approach is based on the smoothing method.
In fact, we explain the convergence analysis of smoothing method by
using  approximate Karush-Kuhn-Tucker condition, which is
necessary for a point to be a local weak efficient solution and is
also sufficient under convexity assumptions. Finally, we give an
application of our approach for solving the bi-objective EEDP.
\end{abstract}
\vspace{30pt}

{\bf Key words:} Nonsmooth multi-objective  optimization, locally
Lipschitz function, Smoothing functions, approximate
Karush-Kuhn-Tucker,  economic and environmental dispatching problem.



%
\begin{sloppypar}
\section{Introduction}
During the last decades  the area of nonsmooth (nondifferentiable)
Multiobjective Optimization Problems (MOP) has been extensively
developed. The MOP  refers to the process of simultaneously
optimizing two or more real-valued objective functions. For
nontrivial problems, no single point will minimize all given
objective functions at once, and so the concept of optimality is to
be replaced by the concept of Pareto optimality or efficiency. One
should recall that a point is called Pareto optimal or efficient, if
there is no different point with the same, or smaller, objective
function values, such that there is a decrease in at least one
objective function value. The nonsmooth MOP problem has applications
in engineering \cite{x1}, economics \cite{x2},
mechanics \cite{x3} and other fields. For more
details,see, for example, Miettinen
\cite{11}.\\

 In this paper, we concentrate on solving a classe of nonsmooth  MOP  that include $\min$, $\max$,  absolute value functions or composition of the plus function
 with smooth functions. Which the approximations are constructed based on the smoothing function for the plus function. For this end we introduce the concept of
 approximate Karush-Kuhn-Tucker AKKT condition for
the approximate  multiobjective problem  inspired by Giorrg. G et
al. \cite{4} and we adapt it to prove the convergence analysis of the
smoothing method, whose feasible set is  defined by inequality
constraints. Note that the AKKT condition has been widely used to
define the stopping criteria of many practical contrained
optimization algorithms \cite{v2000,v2006,v2014}. The objective is to update the smoothing parameters to guarantee the
convergence. We point out that Chen \cite{3} has dealt with
convergence analysis of smoothing method (in the scalar case) by
using a gradient method. Finally, we give an application of
our approach in solving bi-objective Economic and Environmental
Dispatching Problem (EEDP)\cite{A42}. In fact, we transform the nonsmooth EEDP into a set of single-objective subproblems using  the $\epsilon$-constraint method. The objective function of the subproblems is smoothed  and the subproblems are solved by the interior point barrier method.

This paper is organized as follows. In Section 2, we state the
problem under consideration and we recall some useful basic
notations. In Section 3, we define a class of smoothing composite
functions by using the plus function. In Section 4, to explain the
convergence analysis of the smoothing method, we use sequential
AKKT. Finally, we show a numerical application
in Section 5.
\section {Basic notations and properties}
The following notations are used throughout this paper. By
$\langle\cdot,\cdot\rangle$, we denote the usual inner product on
$\mathbb{R}^n$, and by $\|\cdot\|$ we denote its corresponding norm.
Let $\mathbb{R}_{+}^p=\{ x\in \mathbb{R}^p ~:~~ x_i\geq 0, ~~
i={1,\cdots,p}\},~\mathbb{R}_{-}^p=\{ x\in \mathbb{R}^p~:~~ x_i\leq
0, ~~ i={1,\cdots,p}\},~\mathbb{R}_{++}^p=\{ x\in \mathbb{R}^p ~:~~
x_i>0, ~~ i={1,\cdots,p}\}$ and $\mathbb{R}_{--}^p=\{ x\in
\mathbb{R}^p ~:~~ x_i<0, ~~ i={1,\cdots,p}\}$, we consider the
partial orders $\succeq$ (respectively, $\preceq$) and $\succ$ (respectively,
$\prec$), defined as $x \succeq y$ (respectively, $ x \preceq y$) if and only
if $x-y\in \mathbb{R}_+^p $ (respectively, $x-y\in \mathbb{R}_-^p $) and $x
\succ y$ (respectively, $x \prec y$) if and only if $x-y\in \mathbb{R}_{++}^p
$ (respectively, $x-y\in \mathbb{R}_{--}^p $). In this paper we consider the
 nonsmoothing multiobjective problem
  \[
 P_{(1)}: ~\left\{
   \begin{array}{ll}
   \displaystyle \min F (x),\\
    \text{subjet to~~}  x\in S.
   \end{array}
 \right.
 \]\

where $ S=\{ x\in \mathbb{R}^n ~~/~~ g_j(x)\leq 0,~~
j=1,\cdots,m \}$, the objective function $F
:~\mathbb{R}^n\rightarrow \mathbb{R}^p $ is given by
$F(x)=(f_1(x),\cdots,f_p(x))$ nonsmooth, convexe and locally lipschitz and $g_j :~\mathbb{R}^n\rightarrow
\mathbb{R}  is continuously differentiable, ~~j=1,\cdots,m $, $S$ is a feasible set of
$P_{(1)}$. The set of active indexes at a point $x\in S$ is given by
$J(x)=\{ j, ~~ g_j(x)=0\}$  . A point $x^*\in S$ is called Pareto
optimal point or (efficient solution)  of problem $P_{(1)}$ if there
exists no other $x \in S$ with $f_i(x)\leq f_i(x^*),
~~i=1,\cdots,m$ and $f_j(x)<f_j(x^*)$ for at least one index
$j$. If there exists no $x\in S$ with
$f_i(x)<f_i(x^*)~~i=1,\cdots,p$, then $x^*$ is said to be a weak
Pareto optimal point or (weak efficient solution) of problem
$P_{(1)}$.

\begin{definition}\cite{B1}
The upper Clarke directional derivative of a locally Lipschitz
function $f :~\mathbb{R}^n\rightarrow \mathbb{R} $ at $x$ in the
direction $d\in \mathbb{R}^n$ is $$ f^{\circ}(x,d)=\limsup_{z\rightarrow x,~t\downarrow 0} \frac{f(z+td)-f(y)}{t}$$ and
the Clarke subdifferential of $f$ at $x$ is given by
$$\partial_{c}f(x)=\{ \lambda \in \mathbb{R}^n :~~\langle \lambda, d\rangle \leq
f^{\circ}(x,d)~~ \forall d\in \mathbb{R}^n\}$$
\end{definition}
When $f$ is continuously differentiable, one has
$\partial_{c}f(x)=\{\nabla f(x)\}$. Now, we recall some
results which will be needed in our
convergence analysis.
\begin{prop}\cite{B1}\label{p1}

      Let $f :~\mathbb{R}^n\rightarrow \mathbb{R} $ be locally Lipschitz and $h :~\mathbb{R}^n\rightarrow \mathbb{R}$
continuously differentiable. Then
\begin{description}
\item[(i)] $\partial_{c}(f(x)+h(x))=\partial_{c}f(x)+ \nabla h(x)$.
   \item[(ii)] If $x^*$ is a local minimum of $f$, then $0\in \partial_{c}f(x)$.
    \item[(iii)]  If $f(x) = \max\{f_1(x), \cdots , f_p(x)\}$, where $f_j :~\mathbb{R}^n\rightarrow \mathbb{R} $
for all $j\in \{1,\cdots,p\}$ are continuously differentiable, then
$$\partial_{c}f(x) = conv\{\nabla f_j (x) ~:~~ j = 1, \cdots , p
\text{\quad such  that~}  f_j (x) = f(x)\}$$ (here \textbf{conv} denotes
the convex hull).
\end{description}

\end{prop}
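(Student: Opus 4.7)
The plan is to treat part (i) as the workhorse, part (ii) as an immediate consequence of the definition, and part (iii) as the genuinely substantive claim. In every case the strategy is to reduce the statement to a corresponding identity or inequality for the Clarke directional derivative $f^{\circ}(x,\cdot)$, and then transfer it to $\partial_c f(x)$ via the fact that $f^{\circ}(x,\cdot)$ is the support function of the closed convex set $\partial_c f(x)$.

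For (i), I would split the difference quotient as
\[
\frac{(f+h)(z+td)-(f+h)(z)}{t}=\frac{f(z+td)-f(z)}{t}+\frac{h(z+td)-h(z)}{t},
\]
and exploit the $C^{1}$ smoothness of $h$: the second term tends to $\langle\nabla h(x),d\rangle$ as $(z,t)\to(x,0^{+})$, uniformly enough to pass through the $\limsup$. This yields $(f+h)^{\circ}(x,d)=f^{\circ}(x,d)+\langle\nabla h(x),d\rangle$. Since the right-hand side is the support function of the translated set $\partial_c f(x)+\nabla h(x)$, assertion (i) follows.

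For (ii), I would pick the constant sequence $z_k\equiv x^{*}$ together with $t_k\downarrow 0$ inside the $\limsup$ defining $f^{\circ}(x^{*},d)$. Local minimality of $x^{*}$ forces $[f(x^{*}+t_k d)-f(x^{*})]/t_k\geq 0$ for $k$ large, hence $f^{\circ}(x^{*},d)\geq 0$ for every $d\in\mathbb{R}^{n}$, which is precisely the inequality $\langle 0,d\rangle\leq f^{\circ}(x^{*},d)$, i.e.\ $0\in\partial_c f(x^{*})$.

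The main obstacle is (iii). The inclusion $\supseteq$ is comparatively soft: for any index $j$ with $f_j(x)=f(x)$, the function $f-f_j$ is nonnegative everywhere and vanishes at $x$, so (ii) together with (i) gives $0\in\partial_c(f-f_j)(x)=\partial_c f(x)-\nabla f_j(x)$, hence $\nabla f_j(x)\in\partial_c f(x)$; convexity of $\partial_c f(x)$ then absorbs the whole convex hull. The harder inclusion $\subseteq$ rests on the explicit formula
\[
f^{\circ}(x,d)=\max_{j\in J(x)}\langle\nabla f_j(x),d\rangle,\qquad J(x)=\{j:f_j(x)=f(x)\}.
\]
The $\geq$ direction is immediate from what has just been proved. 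For $\leq$, I would argue by continuity: because $f_j(x)<f(x)$ for $j\notin J(x)$ and each $f_j$ is continuous, there is a neighbourhood of $x$ and a threshold on $t$ beyond which both $f(z+td)$ and $f(z)$ are realised by indices in $J(x)$; the difference quotient is then dominated by $\max_{j\in J(x)}[f_j(z+td)-f_j(z)]/t$, whose $\limsup$ as $(z,t)\to(x,0^{+})$ is the claimed maximum by smoothness of the $f_j$. Equality of support functions of closed convex sets finally upgrades the $\supseteq$ inclusion to the equality stated in (iii).
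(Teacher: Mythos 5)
Your proof is correct. The paper itself offers no argument for this proposition---it is quoted verbatim from Clarke's book \cite{B1}---and your three arguments (the support-function identity $(f+h)^{\circ}(x,\cdot)=f^{\circ}(x,\cdot)+\langle\nabla h(x),\cdot\rangle$ for (i), the constant-sequence choice $z\equiv x^{*}$ for (ii), and the two inclusions for (iii), with $\supseteq$ obtained by applying (i) and (ii) to $f-f_j$ and $\subseteq$ by computing $f^{\circ}(x,d)=\max_{j\in J(x)}\langle\nabla f_j(x),d\rangle$ and matching support functions) are exactly the standard ones found there. All steps check out, including the subtle point in (iii) that near $x$ only active indices can realise the maximum, so the difference quotient is dominated by $\max_{j\in J(x)}[f_j(z+td)-f_j(z)]/t$.
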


\section{Smoothing function}
Rockafellar and Wets have shown that for any locally Lipschitz function
$f$, we can construct a smoothing function by using the convolution $$
f(x,\mu)=
\int_{\mathbb{R}^n}f(x-y)\psi_{\mu}(y)dy=\int_{\mathbb{R}^n}f(y)\psi_{\mu}(x-y)dy$$
where $ \psi_{\mu}:~\mathbb{R}^n \rightarrow \mathbb{R}$ is a smooth
kernel function, (see \cite{B3}). In this section we extend the
smoothing method given by Chen \cite{3} to solve nonsmooth MOP, for this, we start by considering a class of smoothing functions.
\begin{definition}
Let $F :~\mathbb{R}^n\rightarrow \mathbb{R}^p $ be a continuous
function given by $F(x)=(f_1(x),\cdots,f_p(x))$, we define a smoothing
function of $F$  by  $\widetilde{F} :~\mathbb{R}^n \times
\mathbb{R}_{++}^p\rightarrow \mathbb{R}^p $ where
$\widetilde{F}(x,\mu)=(\widetilde{f}_1(x,\mu_1),\cdots,\widetilde{f}_p(x,\mu_p))$
such that for each $i=1,\cdots,p~~ \widetilde{f}_i(x,\mu_i) $ is
continuously differentiable in $\mathbb{R}^n$ for any fixed $\mu_i
\in \mathbb{R}_{++}^p$, and for any $x\in \mathbb{R}^n$
$$ \lim\limits_{y \longrightarrow x,\mu_i\downarrow 0}\widetilde{f}_i(y,\mu_i)=f_i(x)~~~ $$
\end{definition}

Now we can construct a smoothing method by using $\widetilde{F}$ and
$\nabla\widetilde{F}$ as follows. The first step is to define a
parametric smooth function $\widetilde{F}(x ,\mu_k)$ to approximate
$F(x)$. The second step we find for a fixed $\mu_k \in \mathbb{R}_{++}^p$ an approximate solution of the smooth MOP
 \[
 P_{({\mu_k})}: ~\left\{
   \begin{array}{ll}
   \displaystyle \min \widetilde{F} (x,\mu_k),\\
    \text{subjet to}~  x\in S.
   \end{array}
 \right.
 \]\

The last step, by updating $\mu_k $, which guarantees the convergence of any accumulation point of a designated subsequence of the iteration sequence generated by the smoothing MOP algorithm is a AKKT point. So
the Pareto optimal solutions (stationary points) of the approximate subproblems $P_{({\mu_k})}$ converge to a Pareto optimal solution (stationary point) of the initial MOP $P_{(1)}$. Note that
the advantage of the smoothing method is to solve optimization problems with
continuously differentiable functions which has
a rich theory and powerful methods \cite{B2}.\\

Many nonsmooth optimization problems can be reformulated by using the
plus function $(h)_+$ for exemple
$\max(h,g)=h+(g-h)_+,~~\min(h,g)=h-(h-g)_+$ and $|h|=(h)_++(-h)_+$.
So that, in this paper, we present a class of smooth
approximation for the plus function by convolution given by Chen
\cite{3}.


\begin{definition}\cite{3}\label{d1}
Let $\rho :~ \mathbb{R}\rightarrow \mathbb{R}^+$ be a piecewise
continuous density function satisfying $$
\rho(s)=\rho(-s)~~~~and~~~~~\kappa :=\int_{\mathbb{R}}|s|\rho(s) ds<
\infty$$
 then the function $\phi :~ \mathbb{R}\times \mathbb{R}^+\rightarrow
 \mathbb{R}^+$ defined by
 \begin{equation}\label{S1}
\phi(h,\mu):=\int_{\mathbb{R}}(t-\mu s)_+\rho(s) ds
\end{equation}
is a smoothing function of $(h)_+$.
\end{definition}

\begin{prop} \cite{3}\label{p1}

For any fixed $\mu >0$, $\phi(\cdot,\mu)$ is continuously
differentiable convex, strictly increasing, and satisfies
 \begin{equation}\label{S2}
0<\phi(h,\mu)-(h)_+\leq \kappa \mu
\end{equation}
then for any $h\in \mathbb{R}$
\begin{equation}\label{S3}
\lim\limits_{h_k\rightarrow h~~\mu\downarrow 0} \phi(h_k,\mu)=(h)_+
\end{equation}
\end{prop}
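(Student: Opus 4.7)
The plan is to establish each of the five claims in turn, using only elementary facts about the plus function together with the properties of the density $\rho$ stipulated in Definition~\ref{d1}. First, I would change variables $u = h - \mu s$ to rewrite
\[
\phi(h,\mu) = \frac{1}{\mu}\int_{\mathbb{R}} (u)_{+}\,\rho\!\left(\frac{h-u}{\mu}\right) du,
\]
displaying $\phi(\cdot,\mu)$ as the convolution of the plus function with the scaled kernel $\mu^{-1}\rho((h-\cdot)/\mu)$; this intuition already suggests that $\phi$ inherits its smoothness from the integrated kernel $\rho$ rather than from the (non-smooth) plus function.

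For $C^{1}$ differentiability, I would differentiate under the integral in the original form: since $h\mapsto(h-\mu s)_{+}$ is differentiable off the null set $\{h=\mu s\}$ with derivative $\mathbbm{1}_{\{h>\mu s\}}$, and the difference quotients are uniformly bounded by $1$ while $\rho \in L^{1}$, dominated convergence yields
\[
\frac{\partial \phi}{\partial h}(h,\mu)=\int_{\mathbb{R}}\mathbbm{1}_{\{h>\mu s\}}\rho(s)\,ds=\int_{-\infty}^{h/\mu}\rho(s)\,ds =: R(h/\mu),
\]
and continuity of $R$ in $h$ follows from absolute continuity of an integrable density. Convexity is automatic: $h\mapsto(h-\mu s)_{+}$ is convex for each fixed $s$, $\rho(s)\ge 0$, and a non-negatively weighted integral of convex functions is convex. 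Strict monotonicity is then read off from the derivative, since $R(h/\mu)>0$ as long as $\rho$ has positive mass in $(-\infty,h/\mu)$, which holds for all $h$ under the symmetry and normalization of $\rho$.

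To prove \eqref{S2}, I would combine the standard Lipschitz bound $|(a)_{+}-(b)_{+}|\le|a-b|$ with $\int \rho\,ds = 1$:
\[
\bigl|\phi(h,\mu)-(h)_{+}\bigr|=\left|\int_{\mathbb{R}}\bigl[(h-\mu s)_{+}-(h)_{+}\bigr]\rho(s)\,ds\right|\le\mu\int_{\mathbb{R}}|s|\rho(s)\,ds=\kappa\mu.
\]
For the strict lower bound, I would exploit the symmetry $\rho(s)=\rho(-s)$ to write
\[
\phi(h,\mu)=\tfrac{1}{2}\int_{\mathbb{R}}\bigl[(h-\mu s)_{+}+(h+\mu s)_{+}\bigr]\rho(s)\,ds,
\]
and invoke convexity of $(\cdot)_{+}$ at the midpoint to get the pointwise inequality $\tfrac{1}{2}\bigl[(h-\mu s)_{+}+(h+\mu s)_{+}\bigr]\ge (h)_{+}$, with strict inequality on the set $\{\mu|s|>|h|\}$.

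Finally, the limit \eqref{S3} follows directly from the sandwich bound: $|\phi(h_{k},\mu)-(h_{k})_{+}|\le\kappa\mu\to 0$ as $\mu\downarrow 0$, and $(h_{k})_{+}\to(h)_{+}$ by continuity of the plus function. The main obstacle I anticipate is not in any single step but in the strict lower bound $0<\phi(h,\mu)-(h)_{+}$: without a full-support hypothesis on $\rho$, a compactly supported density together with an $h$ with $|h|$ large relative to $\mu$ forces the midpoint inequality to hold with equality $\rho$-a.e. I would therefore either frame the statement under an implicit full-support assumption on $\rho$, as is customary in this smoothing literature following Chen~\cite{3}, or weaken $0<$ to $0\le$ in \eqref{S2}.
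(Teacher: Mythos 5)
The paper offers no proof of this proposition at all: it is imported verbatim from Chen \cite{3}, so there is nothing internal to compare against. Your argument is a correct, self-contained reconstruction of the standard convolution-smoothing proof: differentiation under the integral via the $1$-Lipschitz bound on the difference quotients and dominated convergence, convexity as a nonnegative superposition of convex functions, the estimate $|\phi(h,\mu)-(h)_+|\le \mu\int|s|\rho(s)\,ds=\kappa\mu$ from $\int\rho=1$, the symmetrization $\phi(h,\mu)=\tfrac12\int[(h-\mu s)_++(h+\mu s)_+]\rho(s)\,ds$ for the lower bound, and the sandwich argument for \eqref{S3}. Your closing caveat is also well taken and is a genuine defect of the statement as transcribed here: for a compactly supported symmetric density (e.g.\ uniform on $[-1,1]$) one has $\phi(h,\mu)=(h)_+$ exactly for $|h|$ large relative to $\mu$, so the strict inequality in \eqref{S2} must either be weakened to $0\le$ or the kernel assumed to have full support. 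One small inconsistency in your write-up: the same full-support issue already invalidates your earlier claim that $R(h/\mu)=\int_{-\infty}^{h/\mu}\rho(s)\,ds>0$ ``for all $h$'' --- with the uniform kernel, $\phi(\cdot,\mu)$ is constant (equal to $0$) on $(-\infty,-\mu]$, so strict monotonicity fails there too; the caveat you state for the lower bound should be attached to strict increasingness as well. With that adjustment the proof is complete and correct.
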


\begin{prop} \cite{3}\label{p2}
Let $\partial(h)_+$  the Clarke subdifferential of $(h)_+$ and
$G_{\phi}(h)$ is the  subdifferential associated with the smoothing
function $\phi$ at $h$ given by
\begin{equation}\label{S4}
G_{\phi}(h)=con\{ \tau ~/~ \nabla_t\phi(h_k,\mu_k)\rightarrow
\tau~,~~h_k\rightarrow h,~~ \mu_k\downarrow 0\}
\end{equation}
then $$G_{\phi}(h)=\partial(h)_+$$
\end{prop}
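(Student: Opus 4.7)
The plan is to reduce everything to a direct computation of $\nabla_h\phi(h,\mu)$ and then analyze its limiting behaviour as $(h_k,\mu_k)\to(h,0^+)$ in three cases according to the sign of $h$. Since Proposition 3.2 already guarantees that $\phi(\cdot,\mu)$ is $C^1$, differentiation under the integral sign is legitimate, and
\[
\nabla_h\phi(h,\mu)=\int_{\mathbb{R}}\mathbbm{1}_{\{h-\mu s>0\}}\rho(s)\,ds=\int_{-\infty}^{h/\mu}\rho(s)\,ds=R(h/\mu),
\]
where $R$ denotes the cumulative distribution function associated with the density $\rho$. Standard properties give that $R$ is continuous and non-decreasing with $R(-\infty)=0$, $R(+\infty)=1$, and by symmetry $R(0)=1/2$. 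On the other side, since $(\cdot)_+$ is convex, its Clarke subdifferential is $\partial(h)_+=\{0\}$ for $h<0$, $\{1\}$ for $h>0$, and $[0,1]$ for $h=0$.

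Next I would dispatch the two easy cases. If $h>0$ and $h_k\to h$, $\mu_k\downarrow 0$, then eventually $h_k>h/2>0$, so $h_k/\mu_k\to+\infty$ and $R(h_k/\mu_k)\to 1$; hence every limit point equals $1$, forcing $G_\phi(h)=\{1\}=\partial(h)_+$. The case $h<0$ is symmetric and yields $G_\phi(h)=\{0\}=\partial(h)_+$.

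The main point, and the only subtle case, is $h=0$, where I must realize every value of $[0,1]$ as a limit of $R(h_k/\mu_k)$ along admissible sequences. For any $\alpha\in(0,1)$, by the intermediate value theorem there is a $t_\alpha\in\mathbb{R}$ with $R(t_\alpha)=\alpha$; taking $\mu_k=1/k$ and $h_k=t_\alpha/k$ gives $h_k\to 0$, $\mu_k\downarrow 0$ and $R(h_k/\mu_k)=\alpha$ for all $k$. For $\alpha=1$ choose $h_k=1/\sqrt{k}$, $\mu_k=1/k$ so that $h_k/\mu_k=\sqrt{k}\to+\infty$ and $R(h_k/\mu_k)\to 1$; for $\alpha=0$ use the mirror sequence. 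This shows $[0,1]\subseteq G_\phi(0)$. Conversely every such limit lies in the range of $R$, which is contained in $[0,1]$, and taking the convex hull of a subset of the interval $[0,1]$ leaves us inside $[0,1]$, so $G_\phi(0)\subseteq[0,1]$. Combining the two inclusions gives $G_\phi(0)=[0,1]=\partial(0)_+$.

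I do not expect a serious obstacle: the only mildly delicate point is justifying the exchange of derivative and integral that produces the CDF formula, which is handled by dominated convergence (the difference quotients of $(h-\mu s)_+$ in $h$ are bounded by $1$ and converge pointwise a.e.\ in $s$) and in any case is already packaged into Proposition 3.2. The rest is the simple observation that scaling $h_k$ against $\mu_k$ lets the ratio $h_k/\mu_k$ accumulate at any point of $[-\infty,+\infty]$, which is precisely what is needed to sweep out the full subdifferential at the kink.
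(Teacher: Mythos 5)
Your argument is correct. Note, however, that the paper does not prove this proposition at all: it is stated as a quoted result from Chen~\cite{3}, with no proof supplied in the text. What you have written is essentially the standard verification that underlies Chen's statement, carried out from scratch: differentiating under the integral gives $\nabla_h\phi(h,\mu)=R(h/\mu)$ with $R$ the (continuous, since $\rho$ is a density) distribution function of $\rho$, and the three sign cases for $h$ then reduce the computation of $G_\phi(h)$ to the asymptotics of $R$ at $\pm\infty$ and, at the kink $h=0$, to the surjectivity of $R$ onto $(0,1)$ via the intermediate value theorem combined with the scaling $h_k=t_\alpha\mu_k$. All of these steps are sound; the only wording I would tighten is the remark that ``Proposition 3.2 guarantees $\phi(\cdot,\mu)$ is $C^1$, so differentiation under the integral sign is legitimate'' --- the logical order is the reverse (the dominated-convergence argument you give at the end is what establishes both the derivative formula and the $C^1$ property), but since you do supply that dominated-convergence justification, the proof is complete. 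Your version thus fills in a gap the paper delegates to the citation, and it has the added benefit of making explicit exactly which hypotheses on $\rho$ (integrability of $|s|\rho(s)$ is not even needed here --- only that $\rho$ is a symmetric density) drive the gradient-consistency conclusion.
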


\begin{remark}

The plus function $(h)_+$ is convex and globally Lipschitz
continuous. Any smoothing function $\phi(h,\mu)$ of $(h)_+$  is also
convex and globally Lipschitz. In addition, for any fixed $h$, the function
$\phi$ is continuously differentiable, monotonically increasing and
convex with respect $\mu>0$
 and satisfies
$$0\leq
\phi(t,\mu_2)-\phi(t,\mu_1)\leq\kappa(\mu_2-\mu_1)~~~~~\text{for}~~~~\mu_2>\mu_1$$

\end{remark}

Now, we study properties of the smoothing function $\phi$. We assume
that $F:~\mathbb{R}^n\rightarrow \mathbb{R}^p$ given by
$F(x)=(f_1(x),\cdots,f_p(x))$ is locally Lipschitz continuous.
According to Rademacher's theorem, $F$ is differentiable almost
everywhere. For each $i=1,\cdots,p $the Clarke subdiferential
of $f_i$ at a point $x$ is defined by $$\partial f_i(x)=conv\{ v~/~
\nabla f_i(z)\rightarrow v, ~~ f_i ~\text{is differentiable at}
~z,~~z\rightarrow x\}$$

For a locally Lipschitz function $f_i$, the gradient consistency
$$\partial f_i(x)=conv\{ \lim\limits_{x_k\rightarrow
x~~\mu^{i}_k\downarrow 0}\nabla
{\widetilde{f}_i}(x_k,\mu^{i}_k)\}=G_{\widetilde{f_i}}(x)~~~~\forall
x\in \mathbb{R}^n$$ between the Clarke subdifferential and
subdiferential associated with the smoothing function of $f_i$ for each
$i= 1,\cdots,p$. Note that the abrove result is important for the convergence
of smoothing methods.\\

Throughout the rest of this paper we assume that the function $F$ is
given by $F(x)=H((\varphi(x))_+)$ where $H(x)$ and $\varphi(x)$ are
continuously differentiable,  $H(x)=(h_1(x)\cdots,h_p(x))$ with
composents
 $h_i:~\mathbb{R}^n\rightarrow \mathbb{R},~~i= \{1,\cdots,p\}$ and
$\varphi(x)=(\varphi_1(x)\cdots,\varphi_n(x))$ with
$\varphi_j:~\mathbb{R}^n\rightarrow \mathbb{R},~~j=
 1,\cdots,n$. Notice that  $
 (\varphi(x))_+=((\varphi_1(x))_+,\cdots,(\varphi_n(x))_+)$ and its smoothing
 function is  $
 \phi(\varphi(x),\mu)=(\phi(\varphi_1(x),\mu),\cdots,\phi(\varphi_n(x),\mu))^T$.

 Now we show the gradient consistency of the smoothing
composite functions using $\phi$ in definition \ref{d1}  for the plus
function.

 \begin{theorem}\label{t1}
 Let $F(x)=H((\varphi(x))_+)$, where $\varphi:~\mathbb{R}^n\rightarrow
 \mathbb{R}^n$ and $H:~\mathbb{R}^n\rightarrow \mathbb{R}^p$ are
 continuously differentiable, then for each $i= 1,\cdots,p$,
 $\widetilde{f}_i(x,\mu^{i}_k)=h_i(\phi(\varphi(x)),\mu^{i}_k) $ is a smoothing
 function of $f_i$ with the following properties.
 \begin{description}
    \item \textbf{(i)} For any $x\in \mathbb{R}^n$ , $\{\lim\limits_{x_k\rightarrow
x~~\mu^{i}_k\downarrow 0}\nabla {\widetilde{f}_i}(x_k,\mu^i_k)\}$ is
nonempty and bounded, and $\partial f_i(x)=G_{\widetilde{f_i}}(x),$
for each $i= \{1,\cdots,p\}$.
    \item \textbf{(ii)} If $H$, $\varphi_j$  are convex for each $j\in \{1,\cdots,n\}$ and $\varphi_j$ is
    monotonically nondecreasing, then for any fixed $\mu_k^i\in \mathbb{R}_{++}^p $,
    $\widetilde{f}_i(\cdot,\mu_k^i)$ is convex.
 \end{description}
\end{theorem}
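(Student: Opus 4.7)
I would verify the two assertions in turn, most of the work residing in (i). First, I would check that $\widetilde{f}_i(\cdot,\mu_k^i)$ is a bona fide smoothing function. Continuous differentiability in $x$ follows from the chain rule applied to the composition $h_i\circ\phi(\varphi(\cdot),\mu_k^i)$: the preceding results give $C^1$ regularity of $\phi(\cdot,\mu_k^i)$, while $h_i$ and $\varphi_j$ are $C^1$ by hypothesis. This yields the explicit gradient
$$\nabla\widetilde{f}_i(x,\mu_k^i)=\sum_{j=1}^n\partial_j h_i\bigl(\phi(\varphi(x),\mu_k^i)\bigr)\,\phi'\bigl(\varphi_j(x),\mu_k^i\bigr)\,\nabla\varphi_j(x),$$
which will drive the remainder of the argument. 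The defining convergence $\widetilde{f}_i(y,\mu_k^i)\to f_i(x)$ as $(y,\mu_k^i)\to(x,0^+)$ follows from continuity of $h_i$ combined with the pointwise limit $\phi(\varphi_j(y),\mu_k^i)\to(\varphi_j(x))_+$ guaranteed by the preceding properties of $\phi$.

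For (i), the gradient formula yields boundedness of the limit set because $\phi'(h,\mu)\in[0,1]$ (read off directly from the integrand defining $\phi$), while $\nabla h_i$ and $\nabla\varphi_j$ are continuous, hence bounded, on a neighborhood of $x$. Nonemptiness then follows by choosing $x_k=x$ and $\mu_k^i=1/k$ and extracting a Bolzano--Weierstrass subsequence. To establish the equality $\partial f_i(x)=G_{\widetilde{f}_i}(x)$ I would pass to the limit in the gradient formula: along any convergent subsequence, continuity of $\nabla h_i$ and $\nabla\varphi_j$, together with Proposition \ref{p2} applied to each scalar factor $\phi'(\varphi_j(x_k),\mu_k^i)$, forces the limit to have the form $\sum_{j=1}^n\partial_j h_i\bigl((\varphi(x))_+\bigr)\,\tau_j\,\nabla\varphi_j(x)$ with $\tau_j\in\partial(\cdot)_+(\varphi_j(x))$. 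The Clarke chain rule, combined with the fact that $(\cdot)_+$ is a finite convex, hence Clarke regular, function, identifies exactly this collection with $\partial f_i(x)$, giving the required equality of convex hulls.

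For (ii), the argument reduces to the calculus of convex functions. By the Remark, $\phi(\cdot,\mu_k^i)$ is convex and monotonically nondecreasing for every fixed $\mu_k^i>0$; composing with the convex $\varphi_j$ therefore makes $x\mapsto\phi(\varphi_j(x),\mu_k^i)$ convex for each $j$. Under the stated convexity and monotonicity hypotheses on $H$ (each $h_i$ convex and nondecreasing in each coordinate), the standard rule that a convex function nondecreasing in each coordinate, composed with convex coordinate functions, is convex then yields convexity of $\widetilde{f}_i(\cdot,\mu_k^i)$ in $x$.

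The delicate step is promoting the generic inclusion in the Clarke chain rule to an equality when matching $G_{\widetilde{f}_i}(x)$ with $\partial f_i(x)$. This is where Clarke regularity of the plus function is essential, and one must additionally track that, by varying the approach sequences, the scalar limits $\phi'(\varphi_j(x_k),\mu_k^i)\to\tau_j$ can be made to populate $\partial(\cdot)_+(\varphi_j(x))$ independently across the indices $j$, so that the resulting set reproduces $\partial f_i(x)$ exactly rather than being merely contained in it.
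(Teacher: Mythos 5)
Your overall architecture matches what the paper intends: the paper's own proof is a one-line citation of Theorem 1 of Chen \cite{3}, and your sketch is essentially a reconstruction of that result (explicit gradient of the composite, $\phi'(\cdot,\mu)\in[0,1]$ for boundedness, Proposition \ref{p2} for the scalar factors, and the convex--monotone composition rule for part (ii)). One remark on (ii): as literally stated the theorem places the monotonicity hypothesis on $\varphi_j$, which does not make $h_i(\phi(\varphi(\cdot),\mu))$ convex; your argument correctly (if silently) transfers the nondecreasing assumption to the outer map $H$, which is what the composition rule actually requires.

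The genuine gap is in your identification $\partial f_i(x)=G_{\widetilde f_i}(x)$. You reduce both sets to the decoupled chain-rule set $A=\{\sum_{j}\partial_j h_i((\varphi(x))_+)\,\tau_j\,\nabla\varphi_j(x):\ \tau_j\in\partial(\cdot)_+(\varphi_j(x))\}$, asserting in your final paragraph that the limits $\tau_j$ can be made to populate the intervals $\partial(\cdot)_+(\varphi_j(x))$ independently across $j$. They cannot: every $\tau_j$ arises from one and the same sequence $(x_k,\mu_k^i)$, and the symmetry of $\rho$ couples them. Concretely, take $n=2$, $p=1$, $\varphi(x)=(x_1,-x_1)$, $H(y)=y_1-y_2$, so that $f(x)=(x_1)_+-(-x_1)_+=x_1$ is smooth with $\partial f(0)=\{(1,0)\}$; here $A=[0,2]\times\{0\}$ strictly contains $\partial f(0)$, whereas $\nabla\widetilde f(x,\mu)=\bigl(\Psi(x_1/\mu)+\Psi(-x_1/\mu),\,0\bigr)=(1,0)$ identically, with $\Psi$ the cumulative distribution of $\rho$, so $G_{\widetilde f}(0)=\{(1,0)\}$. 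Thus $A=\partial f_i(x)$ is false in general, and the two inclusions you need --- $G_{\widetilde f_i}(x)\subseteq\partial f_i(x)$, which depends essentially on this coupling, and $\partial f_i(x)\subseteq G_{\widetilde f_i}(x)$, which the one-sided Clarke chain-rule inclusion does not deliver --- must be obtained by a different argument, e.g.\ by approximating $x$ with points of differentiability of $f_i$ and using the Rademacher representation of the Clarke subdifferential, which is how the cited Theorem 1 of Chen actually proceeds. Since this identity is the entire content of part (i), the proof as proposed does not go through.
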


\begin{prof}
For any fixed $i\in\{1,\cdots,p\}$, we can derive this theorem by
theorem 1 \cite{3}.
\end{prof}
\begin{prop}\label{p9}
Let $\vartheta(t)=|t|$,
$\vartheta_\mu(t)=sin(\mu).ln(\cosh (\frac{t}{\sin(\mu)})),$
\\ $0 <\mu <\frac{\pi}{2}$. Then
\begin{description}
 \item(i) $0\leq{\vartheta}(t)-\vartheta_\mu(t)\leq\sin(\mu)\ln(2).$
  \item(ii) $|\frac{d({\vartheta_\mu}(t))}{dt}|<1$, and   $\frac{d({\vartheta_\mu}(t))}{dt}|_{t=0}=0$.
  \item (iii) $\vartheta_\mu(t)$ is convex.
\end{description}
\end{prop}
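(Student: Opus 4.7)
Let me set $s=\sin(\mu)>0$ throughout and write $\vartheta_\mu(t)=s\ln\cosh(t/s)$ so that every part reduces to a statement about the auxiliary function $\eta(u):=\ln\cosh(u)$ applied at $u=t/s$.

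For part (i), the plan is to exploit the explicit exponential form of $\cosh$. I would write
\[
\ln\cosh(u)=\ln\!\Bigl(\tfrac{e^{u}+e^{-u}}{2}\Bigr)=|u|+\ln\!\bigl(1+e^{-2|u|}\bigr)-\ln 2,
\]
where the appearance of $|u|$ comes from factoring $e^{|u|}$ out of $e^{u}+e^{-u}$. Rearranging gives
\[
|u|-\ln\cosh(u)=\ln 2-\ln\!\bigl(1+e^{-2|u|}\bigr).
\]
Since $e^{-2|u|}\in(0,1]$, the right-hand side lies in $[0,\ln 2)$. Substituting $u=t/s$ and multiplying by $s>0$ yields $0\leq |t|-\vartheta_\mu(t)\leq s\ln 2=\sin(\mu)\ln 2$, which is exactly (i).

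For part (ii), direct differentiation gives
\[
\frac{d\vartheta_\mu(t)}{dt}=s\cdot\frac{\sinh(t/s)}{\cosh(t/s)}\cdot\frac{1}{s}=\tanh\!\Bigl(\frac{t}{\sin\mu}\Bigr).
\]
The bound $|\tanh(\cdot)|<1$ and $\tanh(0)=0$ are standard and immediately give both claims. For part (iii), a second differentiation gives
\[
\frac{d^{2}\vartheta_\mu(t)}{dt^{2}}=\frac{1}{\sin\mu}\,\operatorname{sech}^{2}\!\Bigl(\frac{t}{\sin\mu}\Bigr)>0,
\]
so $\vartheta_\mu$ is (strictly) convex on $\mathbb{R}$.

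None of the three steps is really delicate; the only place where one could slip is the algebraic identity in (i). The main point to be careful about is the symmetric handling of the sign of $u$, which is why I would factor out $e^{|u|}$ rather than $e^{u}$ from the start---this keeps both cases in a single line and produces the clean remainder $\ln 2-\ln(1+e^{-2|u|})$ that transparently controls the approximation error.
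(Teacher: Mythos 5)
Your proof is correct and follows essentially the same route as the paper: part (i) is the same algebraic factoring (the paper's expression $\exp\bigl(\frac{t-|t|}{\sin\mu}\bigr)+\exp\bigl(\frac{-t-|t|}{\sin\mu}\bigr)$ is exactly your $1+e^{-2|u|}$), and parts (ii) and (iii) are the same derivative computations written with $\tanh$ and $\operatorname{sech}$ instead of explicit exponential quotients.
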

\begin{proof}
\begin{description}
\item $(i)~$Let
\begin{align*}
\vartheta_\mu(t)-\vartheta(t)&=sin(\mu).ln(\cosh (\frac{t}{\sin(\mu)}))-|t|\\
&=\sin(\mu).ln(\frac{1}{2}\exp(\frac{t}{\sin\mu})+\frac{1}{2}\exp(\frac{-t}{\sin\mu}))-|t|\\
&=\sin(\mu).ln(\exp(\frac{t-|t|}{\sin\mu})+\exp(\frac{-t-|t|}{\sin\mu}))+\sin(\mu).ln(\frac{1}{2})
\end{align*}

Since
$1<\exp(\frac{t-|t|}{\sin\mu})+\exp(\frac{-t-|t|}{\sin\mu})\leq 2$. Thus,\\
\begin{center}
\scalebox{0.95}{$\sin(\mu).ln(\frac{1}{2})<\sin(\mu).ln(\exp(\frac{t-|t|}{\sin\mu})+\exp(\frac{-t-|t|}{\sin\mu}))+\sin(\mu).\ln(\frac{1}{2})\leq
\sin(\mu).ln(2)+\sin(\mu).ln(\frac{1}{2})$}
\end{center}
then
$$\sin(\mu).\ln(\frac{1}{2})<\vartheta_\mu(t)-\vartheta(t)\leq 0.$$
Hence, we obtain
$$0\leq{\vartheta}(t)-\vartheta_\mu(t)\leq\sin(\mu)\ln(2).$$

Therefore, $\vartheta_{\mu}$ is a smoothing
approximation function $\vartheta$.
\item $(ii)$ We have
$$\frac{d\vartheta_\mu(t)}{dt}=\frac{\exp(\frac{t}{\sin\mu})-\exp(\frac{-t}{\sin\mu})}{\exp(\frac{t}{\sin\mu})+\exp(\frac{-t}{\sin\mu})}
=\frac{\exp(\frac{2t}{\sin\mu})-1}{(\exp\frac{2t}{\sin\mu})+1}$$

Since~~
$$\left|\frac{\exp(\frac{2t}{\sin\mu})-1}{\exp(\frac{2t}{\sin\mu})+1}\right|<1
$$

Then $$|\frac{d({\vartheta_\mu}(t))}{dt}|<1
$$
\item $(iii)~$
\begin{align*}
{\vartheta_\mu}^{\prime\prime}(t)& = \frac{d}{dt}\left[\frac{d({\vartheta_\mu}(t))}{dt}\right] \\ &=\frac{\frac{4}{\sin\mu}\exp(\frac{2t}{\sin\mu}))}{(\exp(\frac{2t}{\sin\mu})+1)^2}>0. \quad \forall \mu  \in ]0,\frac{\pi}{2}[.
\end{align*}

 Thus ${\vartheta_\mu}$ is convex.

\end{description}
\end{proof}
\begin{prop}\label{p10}
Let  $\vartheta (t)=|t|$, and a vector function $g(x) =
(g_1(x),\cdots , g_p(x))^T$ with components $g_j : \mathbb{R}^n
\rightarrow \mathbb{R}$, we denote $\vartheta(g(x))=|g(x)| =
(|g_1(x)|,\cdots , |g_p(x)|)$ and $\vartheta_{\mu}(g(x)) =
(\vartheta_{\mu_1}(g_1(x)),\cdots , \vartheta_{\mu_p}(g_p(x)))^T$,
with
$\vartheta_\mu(g_j(x))=sin(\mu_j).ln(\frac{1}{2}\exp(\frac{g_j(x)}{sin\mu_j})+\frac{1}{2}\exp(\frac{-
g_j(x)}{sin\mu_j}))$ for $j=\{1,\cdots, p\}$ and $0 <\mu_j
<\frac{\pi}{2}$. Then $\vartheta_{\mu}(g(x))$ is a smoothing
approximation function of $\vartheta(g(x))$.
\end{prop}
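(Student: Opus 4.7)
The plan is to verify, componentwise, the two defining properties of a smoothing function from Section~3 and then assemble them into the vector statement. Fix $j \in \{1,\dots,p\}$ and $\mu_j \in (0,\pi/2)$. By Proposition~\ref{p9}(ii) the scalar map $t \mapsto \vartheta_{\mu_j}(t)$ is $C^1$ on $\mathbb{R}$; assuming the components $g_j$ enjoy the regularity already in force in this paper (continuous, and continuously differentiable when a gradient is required), the chain rule gives that $x \mapsto \vartheta_{\mu_j}(g_j(x))$ is continuously differentiable on $\mathbb{R}^n$ for each fixed $\mu_j$. Stacking these $p$ coordinates yields a vector-valued map that is $C^1$ in $x$ for every fixed $\mu = (\mu_1,\dots,\mu_p) \in (0,\pi/2)^p$, which is the smoothness part of the definition.

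Next I would establish the limit property $\lim_{y\to x,\ \mu_j\downarrow 0} \vartheta_{\mu_j}(g_j(y)) = |g_j(x)|$ for each coordinate. The key input is the \emph{uniform} (in $t$) error bound from Proposition~\ref{p9}(i): $0 \le |t| - \vartheta_{\mu_j}(t) \le \sin(\mu_j)\ln(2)$. Applied at $t = g_j(y)$ and combined with the elementary inequality $\bigl||g_j(y)| - |g_j(x)|\bigr| \le |g_j(y) - g_j(x)|$, the triangle inequality gives
$$\bigl|\vartheta_{\mu_j}(g_j(y)) - |g_j(x)|\bigr| \;\le\; \sin(\mu_j)\ln(2) + |g_j(y) - g_j(x)|.$$
The first term vanishes because $\sin(\mu_j)\to 0$ as $\mu_j\downarrow 0$, and the second vanishes by continuity of $g_j$ as $y\to x$. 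Since the bound is uniform in $t$, no delicate interchange of limits is required. Doing this for each $j \in \{1,\dots,p\}$ and assembling, one obtains $\lim \vartheta_\mu(g(y)) = (|g_1(x)|,\dots,|g_p(x)|)^T = \vartheta(g(x))$, as needed.

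The only subtle point, and the main potential obstacle, is precisely the uniformity in $t$ of the estimate in Proposition~\ref{p9}(i); without it, one would have to control $\vartheta_{\mu_j}$ at the moving argument $g_j(y)$ via a separate equicontinuity argument. Because Proposition~\ref{p9}(i) already supplies this uniformity, the proof reduces to the one-line triangle-inequality estimate above, done componentwise. Thus the proposition follows directly from the scalar case treated in Proposition~\ref{p9}.
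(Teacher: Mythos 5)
Your proof is correct and follows essentially the same route as the paper: both reduce the vector statement to the scalar estimate $0\le |t|-\vartheta_{\mu_j}(t)\le \sin(\mu_j)\ln(2)$ of Proposition~\ref{p9}(i), applied componentwise at $t=g_j(\cdot)$, with $\sin(\mu_j)\le\mu_j$ giving the $\kappa\mu$-type bound. You are in fact somewhat more complete than the paper's own argument, since you also verify the two clauses of the definition of a smoothing function (continuous differentiability in $x$ for fixed $\mu$, and the joint limit as $y\to x$, $\mu_j\downarrow 0$ via the uniformity in $t$ of the error bound), which the paper leaves implicit.
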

\begin{proof}
For any fixed $j\in\{1,\cdots, p\}$, we can derive by proposition
\ref{p10} that

\begin{equation*}
0<\vartheta(g_j(x))-\vartheta_{\mu_j}(g_j(x))\leq\sin(\mu_j).ln(2).
\end{equation*}
Considering  $\kappa=ln(2)$ and $0\leq\sin(\mu_j)\leq\mu_j$  $\forall \mu_j \in [0, \frac{\pi}{2}]$.\\
then,\\
\begin{equation*}
0<\vartheta(g_j(x))-\vartheta_{\mu_j}(g_j(x))\leq \kappa \mu
~~\text{for}~~j\in\{1,\cdots, p\}.
\end{equation*}
Therefore, $\vartheta_{\mu}(g(x))$ is a smoothing
approximation function $\vartheta(g(x))$.

\end{proof}
\section{Smoothing Multiobjective Optimization Problem}
In this section,  we introduce AKKT condition for the multiobjective problem
$P_{(\mu)}$ inspired by Giorrg. G et al. \cite{4}. Then  we exploit
it to prove  convergence analysis of the smoothing method, whose
feasible set is  defined by inequality constraints. In fact, the
solution of problem $P_{(1)}$ is accomplished by solving a sequence
of problems $P_{(\mu)}$, where the value of $\mu$ is updated
according to $\mu_{k+1}=\alpha \mu_k$ with $\alpha\in (0,1)$ is
the decreasing factor of $\mu$. We point out that Chen \cite{3} is
concerned with convergence analysis of the smoothing method (in the
scalar case) by using a smoothing gradient method.
\begin{definition}
We say that the AKKT condition is
satisfied for problem $P_{(\mu)}$ at a feasible point $x^*\in S$ if
there exists a sequence $(x_k)\subset \mathbb{R}^n$ and
$(\lambda_k,\beta_k)\subset \mathbb{R}^p\times \mathbb{R}^m$ such
that
\begin{description}
    \item $(C_0)~~ x_k \rightarrow x^*$
    \item $(C_1)~~ \sum \limits ^{p}_{i=1} \lambda^{i}_k \nabla \widetilde{f_i}(x_k,\mu^{i}_k)+ \sum^{m}_{j=1} \beta^{j}_k \nabla g(x_k)\rightarrow 0$
    \item $(C_2) ~~ \sum \limits^{p}_{i=1} \lambda^{i}_k =1$
    \item $(C_3)~~  g_j(x_k)< 0 \Rightarrow \beta^{j}_k =0$  for $j= \{1,\cdots,n\}$.
\end{description}

\end{definition}

\begin{remark}
\begin{description}
    \item (i) A point satisfying the AKKT
    is called AKKT point.
    \item (ii)  The sequence of points $(x_k)$ is not required to be
    feasible.
    \item (iii)  Assuming $\beta_k\in \mathbb{R}_+^m$, condition $(C_3)$ is
    equivalent to
    $$ \beta^{j}_k g_j(x_k)\leq 0 \text{~for sufficiently large~}
    k, ~~\forall j \notin J(x^*).$$
Each of these condition implies the condition
    $$ \beta^{j}_k  g(x_k)\rightarrow 0~~~ \forall j \notin J(x^*)$$
\end{description}

\end{remark}
The following theorem  establish necessary optimality conditions for
problem $P_{(1)}$.
\begin{theorem}
If $x^*\in S$ is a locally weakly efficient solution for Problem
$P_{(1)}$, then $x^*$ satisfies the AKKT condition with sequences
$(x_k)$ and $(\lambda_k,\beta_k)$. In addition, for these sequence
we have that $\beta_k=b_k(g(x_k))_+$ where $b_k>0 ~~ \forall k.$
\end{theorem}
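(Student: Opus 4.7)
The plan is to adapt the external-penalty with Tikhonov-regularization trick (familiar from scalar AKKT proofs) to the multiobjective setting, and to pick the penalty so that the resulting multipliers take the form $\beta_k = b_k (g(x_k))_+$ automatically. Fix sequences $b_k \to \infty$ and $\mu_k^i \downarrow 0$, and choose $\delta > 0$ small enough that local weak efficiency of $x^*$ holds on $\bar B(x^*,\delta) \cap S$. Define
\begin{equation*}
\Phi_k(x) := \max_{1 \le i \le p}\bigl(\widetilde f_i(x, \mu_k^i) - f_i(x^*)\bigr) + \frac{b_k}{2}\sum_{j=1}^m \bigl((g_j(x))_+\bigr)^2 + \tfrac{1}{2}\|x - x^*\|^2,
\end{equation*}
and let $x_k$ be a minimizer of $\Phi_k$ on $\bar B(x^*, \delta)$, which exists by continuity and compactness.

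Next I would prove $x_k \to x^*$. At the feasible point $x^*$ one has $\Phi_k(x^*) = \max_i(\widetilde f_i(x^*, \mu_k^i) - f_i(x^*)) \to 0$, so $\Phi_k(x_k) \to 0$ as well. Along any subsequence $x_k \to \bar x \in \bar B(x^*, \delta)$, the penalty $\tfrac{b_k}{2}\sum_j ((g_j(x_k))_+)^2$ can stay bounded only if $(g_j(\bar x))_+ = 0$ for every $j$, so $\bar x \in S$. Passing to the limit in the other two terms, using that $\widetilde f_i(x_k,\mu_k^i) \to f_i(\bar x)$ by the smoothing property, yields
\begin{equation*}
\tfrac{1}{2}\|\bar x - x^*\|^2 + \max_i\bigl(f_i(\bar x) - f_i(x^*)\bigr) \le 0.
\end{equation*}
If $\bar x \ne x^*$ this forces $f_i(\bar x) < f_i(x^*)$ for every $i$, contradicting weak efficiency on $\bar B(x^*, \delta) \cap S$. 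Hence $\bar x = x^*$, and the whole bounded sequence converges to $x^*$, giving condition $(C_0)$.

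For $k$ large, $x_k$ lies in the interior of $\bar B(x^*,\delta)$, so $0 \in \partial_c \Phi_k(x_k)$. The only nonsmooth piece of $\Phi_k$ is the max of smooth functions, whose Clarke subdifferential is the convex hull of the gradients at active indices (the standard Clarke calculus rule for maxima of $C^1$ functions); the squared plus function $t \mapsto (t)_+^2$ is $C^1$. Therefore there exist $\lambda_k^i \ge 0$ with $\sum_i \lambda_k^i = 1$ (vanishing off the active index set) satisfying
\begin{equation*}
\sum_{i=1}^p \lambda_k^i \nabla_x \widetilde f_i(x_k, \mu_k^i) + b_k \sum_{j=1}^m (g_j(x_k))_+ \nabla g_j(x_k) + (x_k - x^*) = 0.
\end{equation*}
Setting $\beta_k^j := b_k (g_j(x_k))_+$ produces the requested form $\beta_k = b_k(g(x_k))_+$ with $b_k > 0$, and verifies $(C_3)$ immediately: if $g_j(x_k) < 0$ then $(g_j(x_k))_+ = 0$, so $\beta_k^j = 0$. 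Condition $(C_2)$ is $\sum_i \lambda_k^i = 1$, and $(C_1)$ follows by rearranging the displayed equality as $\sum_i \lambda_k^i \nabla \widetilde f_i(x_k, \mu_k^i) + \sum_j \beta_k^j \nabla g_j(x_k) = -(x_k - x^*) \to 0$.

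The main obstacle is the convergence step $x_k \to x^*$: the three summands of $\Phi_k$ have to be balanced so that $b_k \to \infty$ enforces feasibility of any cluster point, while the strictly convex quadratic $\tfrac{1}{2}\|x - x^*\|^2$ is strong enough that the only way the max-term can stay nonpositive at a feasible limit is by strict domination of $x^*$ in every objective, which is exactly what local weak efficiency forbids. Gradient consistency of the smoothing functions (Theorem \ref{t1}) is what allows the smoothed gradient information collected along $(x_k)$ to be reported as a genuine AKKT certificate at $x^*$ without requiring the $x_k$ themselves to be feasible.
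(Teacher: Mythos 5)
Your proof is correct, and it is considerably more complete than what the paper offers: the paper's ``proof'' is a one-line citation to Theorem 3.1 of Giorgi--Jim\'enez--Novo together with the gradient consistency theorem and the Clarke calculus rules. The penalization you use (max-of-objectives shifted by $f_i(x^*)$, plus an exterior quadratic penalty $\tfrac{b_k}{2}\sum_j((g_j(x))_+)^2$, plus the Tikhonov term $\tfrac12\|x-x^*\|^2$, minimized over a small closed ball) is exactly the mechanism behind the cited Theorem 3.1, so in spirit you are on the paper's intended route. The genuine difference is where the smoothing enters: the paper's sketch suggests first deriving an AKKT certificate for the nonsmooth problem in terms of Clarke subgradients of the $f_i$ and then invoking gradient consistency (Theorem \ref{t1}) to replace each subgradient by a limit of smoothed gradients $\nabla\widetilde f_i(y_\ell,\mu_\ell^i)$, which requires an extra diagonal extraction. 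You instead build $\widetilde f_i(\cdot,\mu_k^i)$ directly into the penalized functional, so the multipliers and gradients you extract are already of the form demanded by $(C_1)$; the only tools you need are the continuous-convergence property of the smoothing functions (to get $\Phi_k(x^*)\to 0$ and to pass to the limit along the subsequence) and the Clarke rule for a maximum of $C^1$ functions from Proposition \ref{p1}(iii). This is cleaner and, as you correctly observe only in passing, it means gradient consistency is not actually used in this theorem --- it matters for interpreting the AKKT certificate as Clarke stationarity of the original $F$, not for producing it. The steps all check out: $\Phi_k(x_k)\le\Phi_k(x^*)\to 0$ forces cluster points to be feasible (penalty blow-up) and then equal to $x^*$ (weak efficiency versus the strict inequality $\max_i(f_i(\bar x)-f_i(x^*))\le-\tfrac12\|\bar x-x^*\|^2<0$), and the stationarity of $\Phi_k$ at an interior minimizer delivers $(C_1)$--$(C_3)$ with $\beta_k=b_k(g(x_k))_+$ as required.
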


\begin{prof}
Its proof is based on Theorem 3.1 in \cite{4}, the gradient consistency Theorem
\ref{t1} and Proposition \ref{p1}.
\end{prof}
In order to establish the sufficient condition
 we  assume convexity assumption and the
following condition.\\
\textbf{Assumption A}:~We call a sum converging to zero, if $~~~~
\sum^{m}_{j=1} \beta^{j}_k g(x_k)\rightarrow 0$.

\begin{theorem}
Assume that $H$, $\varphi_j$ for $j= \{1,\cdots,n\}$ and $g_i$ for
$i=\{1,\cdots,m\}$ are convex and $\varphi_j$ is
    monotonically nondecreasing. If $x^*\in S$  satisfies the AKKT
    condition and  assumption A is fulfilled then $x^*$is a global
    weak efficient solution of problem
$P_{(1)}$.
\end{theorem}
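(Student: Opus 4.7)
The plan is to argue by contradiction, leveraging the convexity transfer granted by Theorem \ref{t1}(ii): under the stated hypotheses every smoothing function $\widetilde{f}_i(\cdot,\mu^i_k)$ is convex in $x$, and the $g_j$'s are convex by assumption. So suppose $x^*\in S$ satisfies the AKKT condition via sequences $(x_k)$ and $(\lambda_k,\beta_k)$, but is not globally weakly efficient. Then there exists $\bar x\in S$ with $f_i(\bar x)<f_i(x^*)$ for every $i=1,\dots,p$, and the goal is to push this strict inequality through the AKKT limit.

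First I would apply the convex subgradient inequalities at the iterate $x_k$, evaluated at $\bar x$:
\begin{align*}
\widetilde{f}_i(\bar x,\mu^i_k)-\widetilde{f}_i(x_k,\mu^i_k) &\geq \langle \nabla \widetilde{f}_i(x_k,\mu^i_k),\,\bar x-x_k\rangle,\\
g_j(\bar x)-g_j(x_k) &\geq \langle \nabla g_j(x_k),\,\bar x-x_k\rangle.
\end{align*}
Multiplying the first by $\lambda^i_k\geq 0$, the second by $\beta^j_k\geq 0$, summing over $i$ and $j$, and invoking $(C_1)$ together with the boundedness of $\bar x-x_k$ (from $(C_0)$), the aggregated right-hand side is $o(1)$. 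This yields
\[
\sum_{i=1}^{p}\lambda^i_k\bigl[\widetilde{f}_i(\bar x,\mu^i_k)-\widetilde{f}_i(x_k,\mu^i_k)\bigr]+\sum_{j=1}^{m}\beta^j_k\bigl[g_j(\bar x)-g_j(x_k)\bigr]\;\geq\; o(1).
\]

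Next I would pass to the limit termwise. Feasibility of $\bar x$ gives $g_j(\bar x)\leq 0$, hence $\beta^j_k g_j(\bar x)\leq 0$; Assumption A gives $\sum_j\beta^j_k g_j(x_k)\to 0$; the defining smoothing property combined with $\mu^i_k\downarrow 0$ (built into the updating rule $\mu_{k+1}=\alpha\mu_k$) yields $\widetilde{f}_i(\bar x,\mu^i_k)\to f_i(\bar x)$ and $\widetilde{f}_i(x_k,\mu^i_k)\to f_i(x^*)$. By $(C_2)$ the multiplier vector $(\lambda^1_k,\dots,\lambda^p_k)$ lies in the unit simplex, so extracting a convergent subsequence $\lambda^i_k\to\lambda^i$ gives $\lambda^i\geq 0$ with $\sum_i\lambda^i=1$. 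Collecting these limits and discarding the nonpositive $\beta^j_k g_j(\bar x)$ block leaves
\[
\sum_{i=1}^{p}\lambda^i\bigl[f_i(\bar x)-f_i(x^*)\bigr]\;\geq\; 0,
\]
which contradicts the strict inequalities $f_i(\bar x)-f_i(x^*)<0$ together with $\sum_i\lambda^i=1$. Hence no such $\bar x$ exists and $x^*$ is a global weak efficient solution of $P_{(1)}$.

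The main obstacle I anticipate is the joint passage $\widetilde{f}_i(x_k,\mu^i_k)\to f_i(x^*)$: the smoothing definition only asserts $\lim_{y\to x,\mu\downarrow 0}\widetilde{f}_i(y,\mu)=f_i(x)$, so one must argue the simultaneous motion of both arguments is covered (either by a local Lipschitz bound on $\widetilde{f}_i(\cdot,\mu)$ uniform for small $\mu$, which Rademacher and the construction of $\widetilde{f}_i$ deliver, or by the fact that $\widetilde{f}_i-f_i=O(\kappa\mu)$ from Proposition \ref{p1}). A secondary but important bookkeeping point is the nonnegativity of the $\lambda^i_k$: it is implicit in the AKKT framework (the multipliers originate from convex combinations of $\partial f_i$ in the necessary condition theorem) and must be recorded explicitly so that the convex subgradient inequalities can be multiplied without flipping direction.
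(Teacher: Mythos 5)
Your argument is correct, and it is genuinely more self-contained than what the paper does: the paper's proof is a one-line appeal to Theorem 3.2 of Giorgi--Jim\'enez--Novo combined with the gradient consistency statement of Theorem \ref{t1}(i), i.e.\ it passes through the Clarke subdifferentials of the original nonsmooth $f_i$. You instead never touch subdifferentials: you work directly with the smoothed functions, using only Theorem \ref{t1}(ii) (convexity of $\widetilde f_i(\cdot,\mu^i_k)$ under the stated hypotheses on $H$ and $\varphi_j$), the ordinary gradient inequality for convex $C^1$ functions at the iterates $x_k$, conditions $(C_0)$--$(C_3)$, Assumption A, and the approximation property $\lim_{y\to x,\,\mu\downarrow 0}\widetilde f_i(y,\mu)=f_i(x)$. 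This buys an explicit, checkable proof in the smoothing setting (where the cited theorem is stated for the original problem, so the paper leaves the adaptation implicit), at the cost of having to record two hypotheses the paper's AKKT definition states only loosely: nonnegativity of the multipliers $\lambda^i_k$ and $\beta^j_k$, and the fact that $\mu^i_k\downarrow 0$ along the sequence. You flag both, correctly. Your worry about the joint limit $\widetilde f_i(x_k,\mu^i_k)\to f_i(x^*)$ is actually already resolved by the paper's Definition 3.1, which asserts exactly the simultaneous limit $\lim_{y\to x,\,\mu_i\downarrow 0}\widetilde f_i(y,\mu_i)=f_i(x)$, so no extra uniform Lipschitz argument is needed there. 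The only cosmetic slip is the final inequality: after discarding the nonpositive $\beta^j_k g_j(\bar x)$ terms and using Assumption A you obtain $\liminf_k\sum_i\lambda^i_k[\widetilde f_i(\bar x,\mu^i_k)-\widetilde f_i(x_k,\mu^i_k)]\ge 0$, and the contradiction with $\sum_i\lambda^i=1$, $\lambda^i\ge 0$, $f_i(\bar x)-f_i(x^*)<0$ for all $i$ goes through exactly as you say.
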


\begin{prof}
By Theorem 3.2 in \cite{4} and gradient consistency Theorem
\ref{t1}, we derive this Theorem.
\end{prof}
\newpage
\section{Bi-Objective Nonsmooth Environmental and Economic Dispatch Problem}
\begin{figure}
  \centering
  \includegraphics[width=10cm]{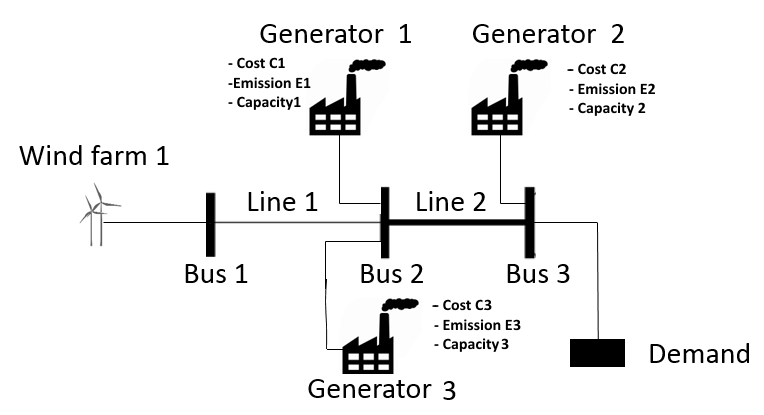}\\
  \caption{Bi-objective Non-smooth
Environmental and Economic Dispatch Problem.}\label{GGG}
\end{figure}
The Bi-objective Economic and Environmental Dispatch Problem (EEDP) is concerned with
the minimization of generation costs and the emission of pollutants
while representing systems operational constraints. Note that the
two objectives are conflicting in nature and they both have to be
considered simultaneously to find overall optimal dispatch. The
EEDP is a multi-objective, nonlinear, and nonsmooth problem.\\

\subsection{Notation}

 \begin{tabular}{p{2cm}p{16cm}}
      $C(P)$  & function cost for all thermal units [\$];\\
     $E(P)$ & total pollutant emission for all thermal units given in [kg/h];\\
    $P$   & vector of active power outputs for all thermal units
[MW]; \\
    $P_{i}$  & active power output of generating unit $i$ [MW]; \\
    $E_{i}$  & active emission output of generating unit $i$ [MW]; \\
     $P^{\max}_i$  & maximum power output of generating unit $i$ [MW]; \\
 $P^{\min}_i$ & minimum power output of generating unit $i$ [MW];\\
 $E^{\max}_i(P)$ & maximum pollutant emission given in [kg/h];\\
 $E^{\min}_i(P)$ & minimum  pollutant emission given in [kg/h].
    \end{tabular}\\


\subsection{Cost function}
The growing costs of fuels and operations of power generating units
require a development of optimization methods for Economic Dispatch
(ED) problems. Standard optimization techniques such as direct
search and gradient methods often fail to find global optimum
solutions.  The realistic operation of the ED problem considers
the couple  valve-point effects and multiple fuel options. The cost
model integrates the valve-point loadings and the fuel changes in
one frame. So, the nonsmooth cost function  is given as \cite{A42}:
\begin{equation*}\label{fuel}
 C(P)=\sum_{i=1}^n C_i(P_i)= \sum_{i=1}^n  a_{i}P_i^2 + b_{i}P_i + c_{i} + \left|g_{i}sin(h_{i}(P_{i}^{min} - P_i)) \right|\\
\end{equation*}
such that :  $ P_{i}^{min}\leq P_{i} \leq P_{i}^{max}$\\
where $g_i$, $h_i$, $a_{i}$, $b_{i}$ and $c_{i}$ are the cost
coefficients of generator $i$.\\
\subsection{Emission function}
The emission function can be formulated as the sum of all types of
emission considered, with convenient pricing
or weighting on each  emitted pollutant. In this paper, only one type
of emission NOx is taken into account without loss of generality
\cite{A99}. The volume of NOx emission is given as a function of
generator output. That is, the sum of a quadratic and exponential
function. The total amount of emission such as SO2 or NOx
depends on the amount of power generated by unit. The NOx emission
amount which is, the sum of a quadratic and exponential function can
be realistically written as :
\begin{equation*}
E(P)=\sum_{i=1}^n E_i(P_i)= \sum_{i=1}^n  \alpha_{i}P_i^2 + \beta_{i}P_i + \gamma_{i}  
\end{equation*}
where, $\alpha_{i}$, $\beta_{i}$, $\gamma_{i}$, $\eta_i$ and $\delta_i$ are the coefficients of the ith
generator emission characteristics.\\
Formulation of the Non-Smooth EEDP:
 \[
 P_{\scalebox{0.5}{NEEDP}}: ~\left\{
   \begin{array}{ll}
   \displaystyle \min\{ C(P), E(P)\}\\
\text{subjet to}:~ \sum_{i=1}^n P_i = P_d\\
  P_i^{min}  \leq P_i \leq P_i^{max}
\end{array}
 \right.
 \]\
\subsection{Application: }
In order to solve EEDP with two generators we consider the
following problem :

\[
 P_{\scalebox{0.5}{NEEDP}}: ~\left\{
   \begin{array}{ll}
   \displaystyle \min\{ C(P), E(P)\}\\
\text{subjet to}:~ P_1 + P_2 = 650 \\
100 \leq P_1 \leq 600 \text{~ and~}  100 \leq P_2 \leq 400.
\end{array}
 \right.
 \]\

where \\
$C_1 (P_1) = 0.001562 P_1^2 + 7.92 P_1 + 561 + |300 \sin(0.0315(P^{min} - P_1 ))|$;\\
$C_2(P_2) = 0.00194 P_2^2 + 7.85 P_2 + 310 + |200 \sin(0.042(P^{min} - P_2 ))|$ ;\\
$C (P) = C_1 (P_1) + C_2 (P_2)$ ;\\
$E_1 ( P_1)= 0.0126 P_1^2 +  1.355 P_1 + 22.983$ ;\\
$E_2 (P_2 ) 0.00765  P_2^2 + 0.805  P_2 + 363.70$  ;\\
$E ( P) = E_1 ( P_1) + E_2 ( P_2)$.
\\

\begin{description}
    \item \emph{Step 1}: We apply the  smoothing method to the nonsmooth objective function
$\ C(P)$  (see Propositions \ref{p10} and \ref{p9}) to obtain a smooth objective function $\tilde{C}(P,{\mu})=\{ \tilde{C_1}(P_1,{\mu}),\tilde{C_2}(P_2,{\mu})\}$ as follows:\\

\begin{equation}
   \begin{split}
      \tilde{C_1}(P_1,{\mu})=&0.001562 P_1^2 + 7.92 P_1 + 561 +
\sin(\mu)\left[\ln\left(\frac{1}{2}\exp\left(\frac{200
\sin(0.042(P^{min} - P_2 ))}{\sin
(\mu)}\right)\right)\right]\\
        &+\frac{1}{2}\exp\left[(\frac{-200 \sin(0.042(P^{min} - P_2 ))}{\sin
(\mu)})\right]
   \end{split}
\end{equation}

\begin{equation}
   \begin{split}
     \tilde{C_2}(P_2,{\mu})=& 0.00194 P_2^2 + 7.85 P_2 + 310 +
\sin(\mu)\left[\ln\left(\frac{1}{2}\exp\left(\frac{300
\sin(0.0315(P^{min} - P_ ))}{\sin (\mu)}\right)\right)\right]\\
        & +\frac{1}{2}\exp\left(\frac{-300
\sin(0.0315(P^{min} - P_1 ))}{\sin (\mu)})\right].
   \end{split}
\end{equation}

 \item \emph{Step 2}: Each of $P^{\mu}_{\scalebox{0.5}{NEEDP}}$ subproblems has the form
 \[
 P^{\mu}_{\scalebox{0.5}{NEEDP}}: ~\left\{
   \begin{array}{ll}
   \displaystyle \min\{ \widetilde{C}(P), E(P)\}\\
\text{subjet to}:~ P_1 + P_2 = 650 \\
100 \leq P_1 \leq 600 \text{~ and~}  100 \leq P_2 \leq 400.
\end{array}
 \right.
 \]\
 \item \emph{Step 3}: The bi-objective  subproblem $P^{\mu}_{\scalebox{0.5}{NEEDP}}$  is transformed into a set of single-objective
subproblems using  the $\epsilon$-constraint method. For both
methods, the objective function of the subproblems are smoothed by
the smoothing method and the subproblems are solved by the interior
point barrier method \cite{A14}.

  \[
 P^{\mu,~\epsilon_l}_{\scalebox{0.5}{NEEDP}}: ~\left\{
   \begin{array}{ll}
   \displaystyle \min \tilde{C}(P,{\mu}),\\
   \text{subject to~} P_1 + P_2 = 650,\\
       E(P) <\epsilon_l;\\
       100 \leq P_1 \leq 600 ,\quad 100 \leq P_2 \leq 400.\\
 \end{array}
 \right.
 \]\
 \item \emph{Step 4}: To create constraint bound vector, consider the number of Pareto points
n = 70; let $\tau=\frac{\ E^{\max}-E^{\min}}{n}$
and  $\epsilon_{l+1} = \epsilon_{l}+\tau$, $l= \{1,\cdots,n-1\}$ with  $\epsilon_{1}=E^{\min}$, and
solve each smoothed single-objective subproblem $P^{\mu,~\epsilon_l}_{\scalebox{0.5}{NEEDP}}$ by the interior point barrier method.
\end{description}
%

\begin{figure}[!h]
\begin{minipage}{.45\linewidth}
\hspace{0cm}
\includegraphics[scale=0.35]{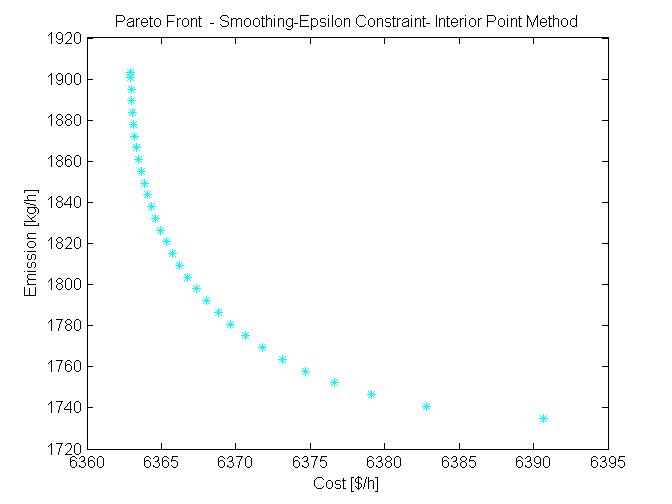}
\caption{\footnotesize{Pareto Front using Smoothing-$\epsilon-$constraint - Interior point  method for $\mu=0.000001$. }}
\end{minipage}
\hfil
\begin{minipage}{.45\linewidth}
\hspace{0cm}
\includegraphics[scale=0.35]{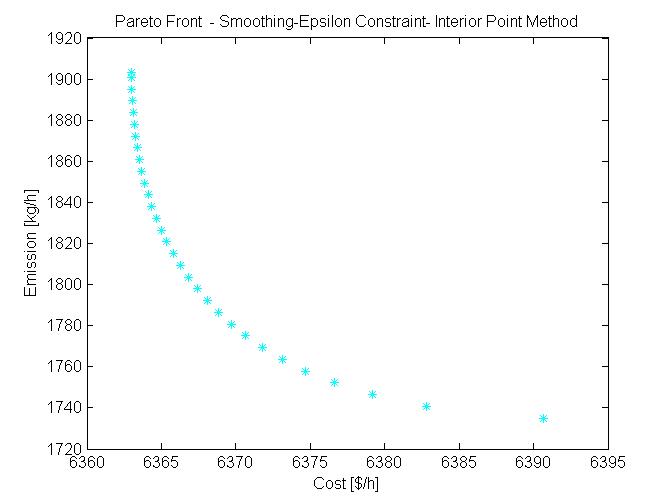}
\caption{\footnotesize{Pareto Front using Smoothing-$\epsilon-$constraint - Interior point  method for $\mu=0.0001$.}} \label{Afig1}
\end{minipage}
\end{figure}

\begin{figure}[!h]

\begin{minipage}{.45\linewidth}
\hspace{0cm}
\includegraphics[scale=0.35]{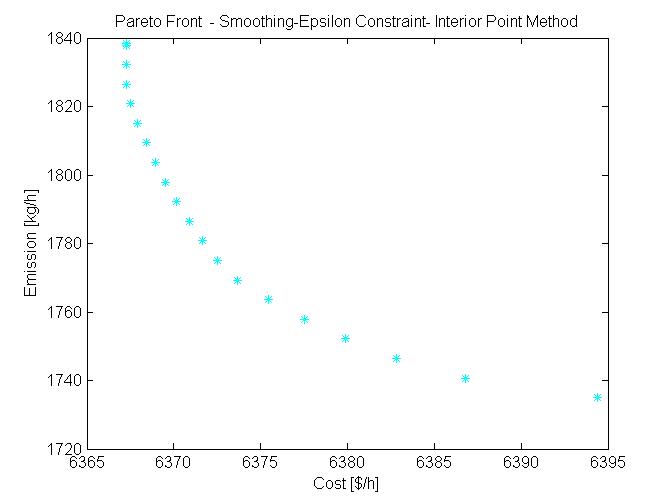}
\caption{\footnotesize{Pareto Front using Smoothing-$\epsilon-$constraint - Interior point  method for $\mu=0.01$}} \label{Afig1}
\end{minipage}
\hfil
\begin{minipage}{.45\linewidth}
\hspace{0cm}
\includegraphics[scale=0.35]{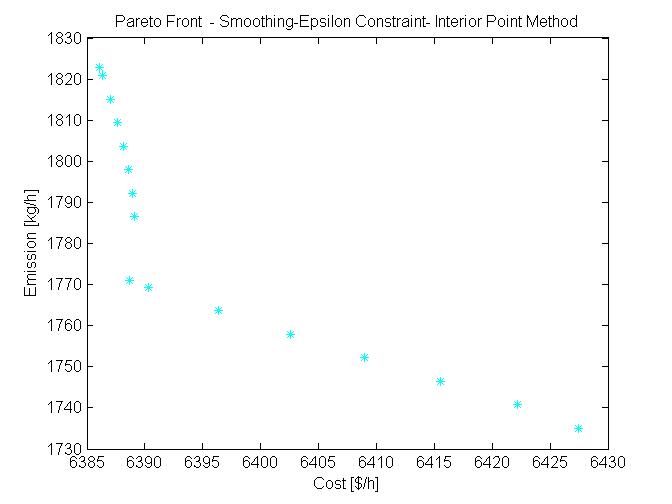}
\caption{\footnotesize{Pareto Front using Smoothing-$\epsilon-$constraint - Interior point  method for $\mu=0.1$.}} \label{Afig1}
\end{minipage}
\end{figure}

\begin{figure}[!h]
\begin{minipage}{.45\linewidth}
\hspace{0cm}
\includegraphics[scale=0.35]{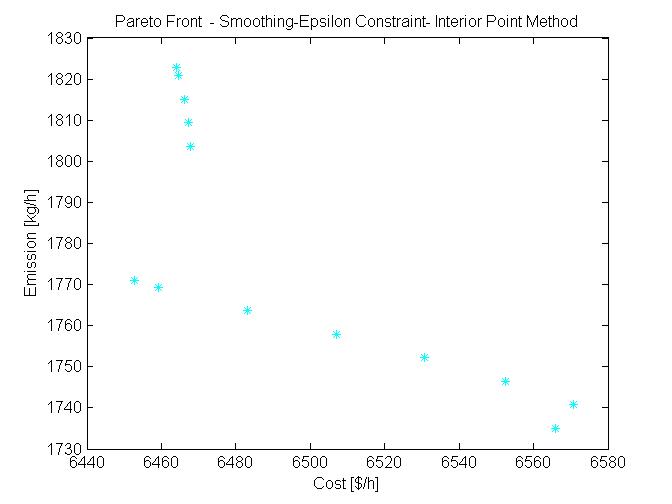}
\caption{\footnotesize{Pareto Front using Smoothing-$\epsilon-$constraint - Interior point  method for $\mu=0.5$.}} \label{Afig1}
\end{minipage}
\hfil
\begin{minipage}{.45\linewidth}
\hspace{0cm}
\includegraphics[scale=0.35]{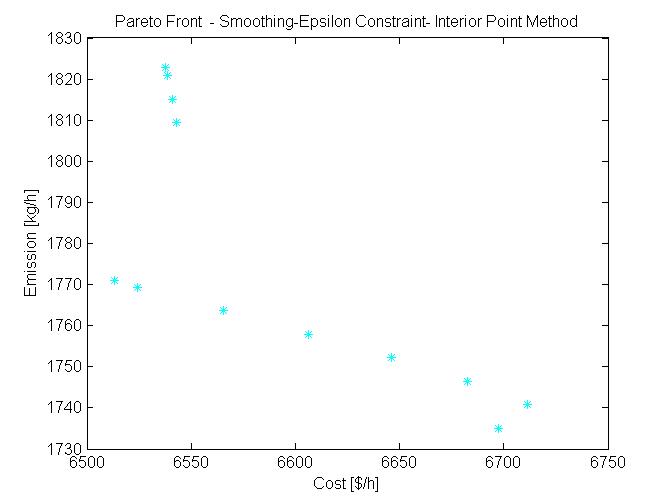}
\caption{\footnotesize{Pareto Front using Smoothing-$\epsilon-$constraint - Interior point  method for $\mu=1$.}} \label{Afig1}
\end{minipage}
\end{figure}

In order to verify our approach performance, some simulations were performed and the results were compared with the PBC-HS-MLBIC method developed by Gonçalves et al. \cite{gon}. This work was chosen because we use the same parameters input.
in the following table,  $\widehat{C}(P)$ and $\widehat{E}(P)$  represent respectively, the total cost and  total pollutant emission obtained by PBC-HS-MLBIC method  \cite{gon} for two generators. \\
From the results presented in table \ref{tab99}, some remarks are rised. The first is the quality of our methodology, as our output results are significantly better compared to that of Gonçalves.

\begin{table}[htbp]
  \centering
  \caption{Minimum fuel cost, minimum emission and comparison to output results of PBC-HS-MLBIC method.}\label{tab99}
    \begin{tabular}{rrrrr|rr}
    \hline  $ n$& $P_1$(MW)& $P_2$(MW) & $C(P)$ (\$)& $E(P)$(kg/h) &$\widehat{C}(P)$ (\$) & $\widehat{E}(P)$(kg/h) \\
  \hline  1     & 259,0835 & 390,7781 & 6390,6884 & 1735,0267 & 6698,45 & 1737,14  \\
    2     & 274,8133 & 375,1867 & 6382,8279 & 1740,72 & 6724,26 & 1747,20\\
    3     & 282,1196 & 367,8804 & 6379,16 & 1746,44 &6681,07 & 1752,92\\

      .    & . & . & . & . & . & . \\
    .    & . & . & . & . & . & . \\
  .    & . & . & . & . & . & . \\
    35    & 350,0203 & 299,9797 & 6362,9941 & 1903,0076 & 6848,11 &  1936,07 \\
    36    & 350,0236 & 299,9764 & 6362,9941 & 1903,0198 & 6845,91 & 1941,79 \\
    37    & 350,0243 & 299,9757 & 6362,9941 & 1903,0223  & 6842,53 & 1947,51\\
       .    & . & . & . & . & . & . \\
    .    & . & . & . & . & . & . \\
  .    & . & . & . & . & . & . \\
    68    & 350,0243 & 299,9757 & 6362,9941 & 1903,0225  & 6407,48 & 2124,93 \\
    69    & 350,0243 & 299,9757 & 6362,9941 & 1903,0223  & 6389,74 & 2130,66\\
    70    & 350,0242 & 299,9758 & 6362,9941 & 1903,0222 &  6383,05 & 2135,34\\
    \hline
    \end{tabular}%
  \label{tab:addlabel}%
\end{table}%
\newpage
{\bf Conclusion:} \\
In this paper, a class of nonsmooth multiobjective optimization problems that include $\min$, $\max$,  absolute value functions or composition of the plus function $(t)^+$ with smooth functions is introduced, and some smoothing methods are presented. The algorithm is based on smoothing techniques to approximate the objective functions in all points where the function is nonsmooth. Numerical results show that the smoothing methods are promising for the nonsmooth MOP.

\newpage

 \end{sloppypar}

 \newpage

\bibliographystyle{plain}


\end{document}